\newtheorem{theorem}{Theorem}
\newtheorem{lemma}{Lemma}
\DeclareMathOperator{\E}{E}
\DeclareMathOperator{\Cov}{Cov}
\DeclareMathOperator*{\argmin}{arg\,min}
\DeclareMathOperator*{\argmax}{arg\,max}
\newcommand{\mbf}[1]{\mathbf{#1}}
\newcommand{\mbs}[1]{\boldsymbol{#1}}
\newcommand{\what}[1]{\widehat{#1}}
\newcommand{\wtilde}[1]{\widetilde{#1}}
\newcommand{\FIM}{\mbf{J}}
\newcommand{\LIM}{\mbf{L}}
\newcommand{\pder}{\partial}
\newcommand{\dermat}{\mbf{D}}
\newcommand{\sCov}{\mbs{\Sigma}}
\newcommand{\sMean}{\mbs{\mu}}
\newcommand{\svec}{\mbf{z}}
\newcommand{\pvec}{\mbs{\theta}}
\newcommand{\psub}{\theta}
\newcommand{\y}{\mbf{y}}
\newcommand{\weight}{\mbf{W}}
\newcommand{\T}{\top}
\newcommand{\I}{\mbf{I}}
\newcommand{\0}{\mbf{0}}
\newcommand{\probset}{\mathcal{P}}
\newcommand{\natvec}{\mbs{\lambda}}
\newcommand{\der}[1]{\frac{\partial #1}{\partial \pvec}}
\newcommand{\score}{\der{\ln p_\psub}}
\newcommand{\mscore}{\der{\ln p_\star}}
\newcommand{\dscore}{\der{\ln \delta_\star}}
\begin{document}

%\title{Lower-bounding Fisher information \\ by Pearson information}
\title{Pearson information-based lower bound \\ on Fisher information}
\author{Dave Zachariah and Petre Stoica\thanks{This
    work has been partly supported by the Swedish Research Council
    (VR) under contract 621-2014-5874.}}
%\date{2000-10-10} % Om du t.e.x. vill skriva in datum själv.

\maketitle

\begin{abstract} 
The Fisher information matrix (FIM) plays an important role in the analysis
of parameter inference and system design problems. In a number of
cases, however, the statistical data distribution and its
associated information matrix are either unknown or intractable. For this
reason, it is of interest to develop useful lower bounds on the
FIM. In this lecture note, we derive such a bound based
on moment constraints. We call this bound the Pearson information matrix
(PIM) and relate it to properties of a misspecified data
distribution. Finally, we show that the inverse PIM coincides with the
asymptotic covariance matrix of the optimally weighted generalized method of moments.
\end{abstract}

The  Cramér-Rao bound (CRB) is a useful tool for the analysis of parameter inference
problems, benchmarking
estimators, and system design
\cite{Rao1945_information,Cramer1946_contribution,vanTrees2013_detection,Kay1993_fundamentals}. Let
$\y = [y_1 \cdots y_N ]^\T$ denote the observed data from a system and let
$\pvec \in \mathbb{R}^n$ be the parameters of interest. The CRB exists under
certain regularity conditions and is given by the inverse of the
Fisher information matrix (FIM) $\FIM(\pvec)$, which is a function of
the probability density $p(\y; \pvec)$. More
specifically, the FIM is defined as 
\begin{equation}\label{eq:FIM}
\FIM(\pvec) \triangleq 
\E \left[\der{\ln p(\y; \pvec)} \der{\ln p(\y; \pvec)}^\T \right]
\succeq \0,
\end{equation}
where the gradient $\der{\ln p(\y; \pvec)}$ is known as the `score
function'. Under certain regularity conditions, the score function has
zero mean
\begin{equation}
\E\left[\der{\ln p(\y; \pvec)} \right] = \0.
\label{eq:score_zeromean}
\end{equation}

In many applications, $\FIM(\pvec)$ may not be obtainable. For
instance, $p(\y ; \pvec)$ may be unknown in a practical problem. As an
example, let the observations be modeled as $\y = f(\mbf{u})$, where $f(\cdot)$ is a nonlinear
function and $\mbf{u}$ follows a probability density function
$p(\mbf{u}; \pvec)$. Then a closed-form expression for $p(\mbf{y}; \pvec)$
is not available in general even if $p(\mbf{u}; \pvec)$ is known. 

It is possible, however, to derive tractable lower bounds on the FIM. If $\E[\y]$ is a function of $\pvec$ and
$\Cov[\y]$ is independent of $\pvec$, then using the Gaussian distribution in lieu of
$p(\y; \pvec)$ leads to the minimum FIM $\FIM_G(\pvec) \preceq
\FIM(\pvec)$ (where $G$ stands for Gaussian) and therefore the `worst-case'
inference scenario
\cite{Stoica&Babu2011_gaussian,ParkEtAl2013_gaussian}. A
generalization to $\pvec$-dependent $\Cov[\y]$ was given in
\cite{SteinEtAl2014_lower}. The minimum FIM can be
used for robust system design and estimator formulations. However, in some
cases the minimum information can be overly conservative.

In this lecture note, we derive a tighter lower bound
\begin{equation}\label{eq:basiclowerbound}
\0 \preceq \LIM(\pvec) \preceq \FIM(\pvec) \in \mathbb{R}^{n \times n},
\end{equation}
based on moment constraints. For this reason we call
$\LIM(\pvec)$ the Pearson information matrix (PIM) as an homage to the
inventor of `the method of moments' \cite{Pearson1894_contributions}. As we will see, the PIM is related to
the  generalized method of moments \cite{Hansen1982_large}.

\section{Relevance}

The PIM is a tractable tool for analyzing parameter estimation and system design problems when the statistical data distribution is unknown or intractable.

\section{Prerequisities}

The reader needs basic knowledge about linear algebra,
elementary probability theory, and statistical signal processing.

\section{Preliminaries}

We begin by constructing a function $\svec(\y)$ that contains $M$
statistics of $\y$. We assume that $\svec(\y)$ has computable---either analytically or numerically---mean and covariance
\begin{equation}\label{eq:sMeanCov}
\begin{split}
\sMean(\pvec) &\triangleq \E[\svec(\y)] \in \mathbb{R}^M, \\
\sCov(\pvec)  & \triangleq \E[\left(\svec(\y)
  -\sMean\right)\left(\svec(\y) -\sMean \right)^\T] \in \mathbb{R}^{M \times M},
\end{split}
\end{equation}
where $M \geq n$. For instance, $\svec$ may be constructed using powers
of the data, that is, its elements are made up of empirical moments $\{
y_i \}$ $\{ y_i y_j \}$, $\{ y_i y_j y_k \}$, etc. We assume that $\sCov(\pvec)$  is nonsingular. For notational
simplicity, we drop the argument $\pvec$ in the next analysis and reinstate it when needed. We also write $p_\psub = p(\y  ; \pvec)$.

\section{Pearson information matrix}

In Section~\ref{sec:PIM_algebra}, we begin with a step-by-step algebraic derivation of $\LIM$ in
\eqref{eq:basiclowerbound} which will define the Pearson information
matrix. As explained
there, the PIM generalizes the results in
\cite{Stoica&Babu2011_gaussian,ParkEtAl2013_gaussian,SteinEtAl2014_lower}
and coincides with a bound recently derived in
\cite{SteinEtAl2015_fisher} (by comparison this lecture notes provides
a simple textbook-style derivation of the bound as well as further
connections). In Section~\ref{sec:PIM_connection} we go on to provide an information-theoretic connection
between the PIM and misspecified data distributions using the
principle of maximum entropy \cite{Jaynes1957_information}. Then we
study the behaviour of PIM
when $M$ increases in Section~\ref{sec:PIM_increaseM}.
Finally, in Section~\ref{sec:PIM_achievable}, we establish a relation between the PIM and generalized method of moments that is analogous to the relation between the FIM and the maximum likelihood method. The presented results
enable a tractable analysis of a wider class of data models that
satisfy certain moment constraints.

\subsection{Algebraic derivation}
\label{sec:PIM_algebra}

Consider a linear combination of the centered statistics $\svec-\sMean$:
$$\weight^\T(\svec
- \sMean),$$ where $\weight^\T \in \mathbb{R}^{n \times M}$ denotes a linear
combiner matrix. This vector has zero-mean similar to the score function, cf. \eqref{eq:score_zeromean}. We construct the following matrix
\begin{equation}\label{eq:jointmatrix_init}
\E \begin{bmatrix}
\der{\ln p_\psub} \\
\weight^\T (\svec - \sMean)
\end{bmatrix}
\begin{bmatrix}
\der{\ln p_\psub} \\
\weight^\T (\svec - \sMean)
\end{bmatrix}^\T  \succeq \0.
\end{equation}

Under regularity conditions that allow the interchanging of integral and
derivative operations \cite{Cramer1946_contribution}, the following identity holds:
\begin{equation}\label{eq:momentidentity}
\begin{split}
&\E\left[ \der{\ln p_\psub } (\svec - \sMean)^\T \right] \\
&=  \int
  \frac{1}{p_\psub} \der{p_\psub } (\svec - \sMean)^\T  p_\psub
  d\y  \\
&= \int \left( \der{p_\psub (\svec - \sMean)^\T } + p_\psub\der{\sMean^\T} \right) d\y  \\
&= \frac{\partial}{\partial \pvec}\underbrace{\E\left[ (\svec - \sMean)^\T\right]}_{= \0}  +
\underbrace{\int p_\psub \: d\y}_{=1} \der{\sMean^\T} \\
&= \dermat^\top ,
\end{split}
\end{equation}
where 
\begin{equation}
\dermat^\top = \der{\sMean^\top} \in \mathbb{R}^{n \times M}
\end{equation}
is the gradient of the mean vector. Using \eqref{eq:sMeanCov} and \eqref{eq:momentidentity}, the matrix in \eqref{eq:jointmatrix_init} can be expressed as
\begin{equation}\label{eq:jointmatrix}
\begin{bmatrix}
\FIM & \dermat^\T \weight \\
\weight^\T \dermat & \weight^\T\sCov \weight
\end{bmatrix} \succeq \0.
\end{equation}
It follows from the Schur complement of the lower-right
block of
\eqref{eq:jointmatrix} that
\begin{equation}\label{eq:lowerbound_w}
\0 \preceq \dermat^\T \weight ( \weight^\T \sCov \weight  )^{-1}
\weight^\T \dermat \preceq \FIM,
\end{equation}
assuming that $ \weight^\T \sCov \weight $ has full rank \cite{Horn&Johnson1990_matrix}. Equation
\eqref{eq:lowerbound_w} yields a nonnegative lower bound on the FIM
that is dependent on the choice of the linear combiner $\weight$.

The tightest lower bound
\eqref{eq:lowerbound_w} is found by solving the problem
\begin{equation}\label{eq:opt}
\max_{\weight} \; \dermat^\T \weight ( \weight^\T \sCov \weight  )^{-1} \weight^\T \dermat.
\end{equation}
The combiner that produces the tightest bound is $\weight_\star = \sCov^{-1} \dermat$. To show this, begin by constructing the following positive semidefinite matrix,
\begin{equation}\label{eq:jointmatrix_opt}
\begin{split}
&\quad \begin{bmatrix}
\dermat^\T \sCov^{-1} \dermat & \dermat^\T \weight \\
\weight^\T \dermat & \weight^\T\sCov \weight
\end{bmatrix} \\
&= 
\begin{bmatrix}
\I & \0 \\
\0 & \weight^\T
\end{bmatrix}
\begin{bmatrix}
\dermat^\T \sCov^{-1} \dermat & \dermat^\top \\
\dermat & \sCov
\end{bmatrix}
\begin{bmatrix}
\I & \0 \\
\0 & \weight
\end{bmatrix} \\
&=\begin{bmatrix}
\I & \0 \\
\0 & \weight^\T
\end{bmatrix}
\begin{bmatrix}
\dermat^\T \sCov^{-1/2} \\ 
\sCov^{1/2}
\end{bmatrix}
\begin{bmatrix}
\dermat^\T \sCov^{-1/2} \\ 
\sCov^{1/2}
\end{bmatrix}^\T
\begin{bmatrix}
\I & \0 \\
\0 & \weight
\end{bmatrix}  \succeq \0.
\end{split}
\end{equation}
Using the Schur complement of the lower-right block of
\eqref{eq:jointmatrix_opt} we obtain the upper bound
\begin{equation}\label{eq:bound_opt}
\dermat^\T \weight ( \weight^\T \sCov \weight  )^{-1}
\weight^\T \dermat \preceq \dermat^\T \sCov^{-1} \dermat, 
\end{equation}
which is clearly attained at $\weight_\star = \sCov^{-1} \dermat$.

In conclusion, using \eqref{eq:lowerbound_w} and \eqref{eq:bound_opt},
we have proved the following theorem.
\begin{theorem} The
 optimal lower bound (in the class of bounds considered) is
\begin{equation}\label{eq:lowerbound}
\boxed{\LIM \triangleq \dermat^\T \sCov^{-1} \dermat \preceq \FIM,}
\end{equation}
where $\sCov$ and $\dermat$ are either obtained analytically or computed
numerically. We call $\LIM \succeq \0$ the Pearson information matrix for reasons
explained above.
\end{theorem}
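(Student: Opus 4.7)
The plan is to chain the two matrix inequalities already assembled in the preceding discussion. Equation \eqref{eq:lowerbound_w} certifies that $\dermat^\T \weight (\weight^\T \sCov \weight)^{-1} \weight^\T \dermat \preceq \FIM$ holds for every admissible combiner $\weight$, while \eqref{eq:bound_opt} singles out $\weight_\star = \sCov^{-1}\dermat$ as the combiner that makes the left-hand side equal to $\dermat^\T \sCov^{-1} \dermat$ and simultaneously upper-bounds the left-hand side for every other choice. Evaluating \eqref{eq:lowerbound_w} at $\weight_\star$ therefore yields $\LIM = \dermat^\T \sCov^{-1} \dermat \preceq \FIM$, and $\LIM \succeq \0$ is automatic since $\sCov^{-1}$ is positive definite.

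Concretely, I would first reproduce \eqref{eq:lowerbound_w}: the matrix in \eqref{eq:jointmatrix_init} is positive semidefinite by construction as the expectation of an outer product; the moment identity \eqref{eq:momentidentity} rewrites its off-diagonal block as $\dermat^\T \weight$, yielding \eqref{eq:jointmatrix}; and a Schur complement with respect to the full-rank block $\weight^\T \sCov \weight$ produces the desired family of lower bounds on $\FIM$. I would then justify the supporting upper bound \eqref{eq:bound_opt} by exhibiting the block matrix in \eqref{eq:jointmatrix_opt} as a congruence of an outer-product matrix formed from $[\dermat^\T \sCov^{-1/2};\, \sCov^{1/2}]$, which makes it manifestly positive semidefinite, and applying a second Schur complement; direct substitution then confirms that equality is attained precisely at $\weight_\star$.

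The only genuine technical concern is the validity of the moment identity \eqref{eq:momentidentity}, which invokes the standard CRB-type regularity conditions permitting the interchange of differentiation and integration and the vanishing of $\E[\svec - \sMean]$ after differentiation. Once those are granted, the remainder is a pure matrix-algebra exercise in Schur complements and the chaining step is essentially one line, so no substantial obstacle remains.
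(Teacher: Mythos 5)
Your proposal is correct and follows essentially the same route as the paper: establish the $\weight$-dependent family of lower bounds \eqref{eq:lowerbound_w} via the positive semidefinite block matrix \eqref{eq:jointmatrix} and a Schur complement, then show via \eqref{eq:jointmatrix_opt} and \eqref{eq:bound_opt} that the choice $\weight_\star = \sCov^{-1}\dermat$ maximizes the bound and attains $\LIM = \dermat^\T \sCov^{-1} \dermat \preceq \FIM$. Your added remark that the moment identity \eqref{eq:momentidentity} is the only step requiring regularity conditions is accurate and consistent with the paper's own caveat.
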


\emph{Remark:} Suppose $\y \sim p_\psub$ can
be modeled as
\begin{equation*}%\label{eq:additivemodel}
\y = \sMean(\pvec) + \mbf{w} \in \mathbb{R}^N,
\end{equation*}
where $\mbf{w}$ is a zero-mean random variable. Let $\svec(\y) =
\y$. Then the corresponding PIM coincides with the FIM bounds in
\cite{Stoica&Babu2011_gaussian,ParkEtAl2013_gaussian} and in
\cite{SteinEtAl2014_lower}, when the covariance matrix is fixed,
$\sCov$, and variable, $\sCov(\pvec)$, respectively. The above
algebraic derivation of the PIM provides, moreover, a simple textbook-like proof of the
optimized FIM bound in \cite{SteinEtAl2015_fisher} (which also
contains an illustrative example consisting of a nonlinear
amplification device).

\subsection{Connection to misspecified data distributions}
\label{sec:PIM_connection}

We now relate $\LIM$ to certain properties of misspecified data distributions using the principle of maximum entropy. Instead of the unknown or intractable distribution $p_\psub$, we will use an alternative statistical model, denoted $p_\star$, along with the following identity:
$$\ln p_\psub = \ln p_\star + \ln \delta_\star,$$ 
which holds for any choice of $p_\star$, where $\delta_\star = \frac{p_\psub}{p_\star}$.

The uncertainty of the data $\y$ is quantified by the (differential) entropy which can be decomposed as
\begin{equation*}
\begin{split}
H(p_\psub) &\triangleq -\E[ \ln p_\psub ] = -\E[ \ln p_\star ] - \Delta( p_\psub || p_\star  ),
\end{split}
\end{equation*}
where $\Delta( p_\psub || p_\star  ) = \E[\ln \delta_\star] \geq 0$ is
the divergence of $p_\star$ from the unknown distribution $p_\psub$
\cite{Kullback1959_infostats,Cover&Thomas2012_elements}. We decompose the score function into
\begin{equation}\label{eq:score}
\der{\ln p_\psub} = \mscore  + \dscore,
\end{equation}
where the terms correspond to a misspecified score and a divergence score, respectively. The misspecified information matrix is defined as
\begin{equation}\label{eq:MFIM}
\begin{split}
\FIM_\star &\triangleq \E\left[ \mscore \mscore^\T \right] \succeq \0,
\end{split}
\end{equation}
where the expectation is taken with respect to $p_{\psub}$. 

\begin{lemma}
A general lower bound on $\FIM$ is
\begin{equation}\label{eq:genbound}
\begin{split}
\FIM_\star + \wtilde{\FIM} \preceq \FIM,
\end{split}
\end{equation}
where
\begin{equation}\label{eq:TFIM}
\begin{split}
 \wtilde{\FIM} &= \E\left[ \der{\ln \delta_\star} \der{\ln p_\star   }^\T
\right] + \E\left[ \der{\ln p_\star   } \der{\ln \delta_\star}^\T \right].
\end{split}
\end{equation}
%\label{lem:general_lowerbound_FIM}
\end{lemma}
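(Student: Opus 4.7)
The plan is to prove the lemma by simply expanding the FIM using the additive score decomposition in \eqref{eq:score} and identifying a residual positive semidefinite term.

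First, I would substitute $\score = \mscore + \dscore$ into the definition \eqref{eq:FIM} of $\FIM$, and expand the resulting outer product:
\begin{equation*}
\FIM = \E\!\left[(\mscore + \dscore)(\mscore + \dscore)^\T\right].
\end{equation*}
By linearity of expectation, this splits into four terms: $\E[\mscore \mscore^\T]$, the two cross terms $\E[\mscore \dscore^\T]$ and $\E[\dscore \mscore^\T]$, and $\E[\dscore \dscore^\T]$.

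Next, I would recognize the first term as $\FIM_\star$ from \eqref{eq:MFIM} and the two cross terms as exactly $\wtilde{\FIM}$ from \eqref{eq:TFIM}. This gives the exact identity
\begin{equation*}
\FIM = \FIM_\star + \wtilde{\FIM} + \E\!\left[\dscore \dscore^\T\right].
\end{equation*}
The bound \eqref{eq:genbound} then follows immediately by observing that $\E[\dscore \dscore^\T] \succeq \0$, since it is the expectation of an outer product of a real vector with itself.

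I do not expect any serious obstacle: the result is essentially bookkeeping once the score decomposition \eqref{eq:score} is in hand. The only subtle point worth flagging is that all expectations are to be taken under the true density $p_\psub$ (as stated after \eqref{eq:MFIM}), which ensures that the expansion of $\E[\score \score^\T]$ really is $\FIM$ and not some expectation with respect to $p_\star$; this distinction is what keeps the cross terms $\wtilde{\FIM}$ from vanishing (they would vanish under $p_\star$ by the usual zero-mean score identity \eqref{eq:score_zeromean}, but need not vanish under $p_\psub$).
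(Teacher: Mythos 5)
Your proposal is correct and is essentially identical to the paper's own proof: both substitute the score decomposition \eqref{eq:score} into \eqref{eq:FIM}, obtain the exact identity $\FIM = \FIM_\star + \wtilde{\FIM} + \E[\dscore\,\dscore^\T]$, and conclude from the positive semidefiniteness of the last term. Your closing remark about all expectations being taken under $p_\psub$ is a correct and worthwhile clarification.
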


\begin{proof}
Inserting \eqref{eq:score} into \eqref{eq:FIM}, we obtain the following decomposition
\begin{equation}\label{eq:FIMdecomp}
\begin{split}
\FIM %&= \E\left[\left(\mscore+\dscore\right) \left(\mscore+\dscore\right)^\T\right] \\
&= \FIM_\star + \wtilde{\FIM} + \E\left[\dscore \dscore^\T\right]
\end{split}
\end{equation}
and the result follows immediately.
\end{proof}

We are concerned with misspecified data models $p_\star$ that
satisfy the given constraint $\E[\svec] = \sMean$. That is, distributions that satisfy
\begin{equation}
\begin{split}
\int \svec p_\star \: d\y = \sMean.
\end{split}
\label{eq:momentconstraint}
\end{equation}
In particular, we let $p_\star$ correspond to the maximum uncertainty of
$\y$. The distribution with the maximum (differential) 
entropy is known to be
\begin{equation*}
\begin{split}
p_\star &\equiv \argmax_{p' \in \probset} \;
-\E'[\ln p'] = \exp( \natvec^\T
\svec - \lambda_0),
\end{split}
\end{equation*}
where $\mathcal{P}$ is the set of valid probability
distributions for $\y$ that satisfy \eqref{eq:momentconstraint} and
$\lambda_0, \natvec$ are multipliers that are chosen to satisfy the
constraint (assuming that the problem is feasible) \cite{Cover&Thomas2012_elements}. For
completeness, we prove this result by noting that the following upper
bound holds for any $p'$:
\begin{equation*}
\begin{split}
H(p') &= - \E'[  \ln p' ]  \\
&\leq - \E'[  \ln p_\star ] \\
&= -\int ( \natvec^\T \svec - \lambda_0) p' \ d\y \\
&= -\int ( \natvec^\T
\svec - \lambda_0) p_\star \ d\y \\
&= H(p_\star).
\end{split}
\end{equation*}
 The equality in the penultimate line follows since both $p'$ and
$p_\star$ satisfy the constraint \eqref{eq:momentconstraint}. The maximum entropy distribution therefore belongs to the exponential
family, that is,
\begin{equation}\label{eq:maxent_dist}
p_\star(\y ; \pvec) = \exp\left( \natvec^\top(\pvec) \svec(\y) - \lambda_0(\pvec) \right),
\end{equation}
where
\begin{equation*}
\lambda_0(\pvec) = \ln \int \exp\left( \natvec^\top(\pvec) \svec(\y) \right) d\y
\end{equation*}
is a normalizing constant.

\begin{lemma}
Using the maximum entropy distribution \eqref{eq:maxent_dist},
the bound in \eqref{eq:genbound} is given by
\begin{equation}\label{eq:gaptight}
\begin{split}
\FIM_\star + \wtilde{\FIM} = \LIM - \left(\sCov^{-1} \dermat - \der{\natvec} \right)^\T \sCov
\left(\sCov^{-1} \dermat - \der{\natvec} \right)
\end{split}
\end{equation}
\end{lemma}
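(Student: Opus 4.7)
\emph{Approach.} Under the maximum entropy form \eqref{eq:maxent_dist}, the misspecified score will turn out to be a linear combiner of the centered statistics, so $\FIM_\star + \wtilde{\FIM}$ reduces to a quadratic expression in the gradient $\der{\natvec}$. The plan is to evaluate this quadratic form explicitly and then recognize \eqref{eq:gaptight} as a completing-the-square identity whose optimum, attained at $\der{\natvec} = \sCov^{-1}\dermat$, reproduces the PIM, in direct analogy with the optimization problem \eqref{eq:opt} in Section~\ref{sec:PIM_algebra}.

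First I would differentiate $\ln p_\star = \natvec^\T \svec - \lambda_0$ to obtain $\mscore = (\der{\natvec})^\T \svec - \der{\lambda_0}$. To eliminate $\der{\lambda_0}$, I would differentiate the normalization $\int p_\star \, d\y = 1$ under the integral sign, which yields $\E_{p_\star}[\mscore] = \0$; combining this with the moment constraint \eqref{eq:momentconstraint} gives $\der{\lambda_0} = (\der{\natvec})^\T \sMean$. Hence
\begin{equation*}
\mscore = (\der{\natvec})^\T (\svec - \sMean),
\end{equation*}
so the misspecified score is exactly of the combiner form $\weight^\T (\svec - \sMean)$ from Section~\ref{sec:PIM_algebra} with $\weight = \der{\natvec}$.

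Next I would plug this into \eqref{eq:MFIM} and \eqref{eq:TFIM}. The misspecified FIM is immediate, $\FIM_\star = (\der{\natvec})^\T \sCov \, \der{\natvec}$. For $\wtilde{\FIM}$, I would write $\dscore = \score - \mscore$ and apply the moment identity \eqref{eq:momentidentity}, namely $\E[\score (\svec - \sMean)^\T] = \dermat^\T$; a short calculation then gives
\begin{equation*}
\wtilde{\FIM} = \dermat^\T \der{\natvec} + (\der{\natvec})^\T \dermat - 2 (\der{\natvec})^\T \sCov \, \der{\natvec}.
\end{equation*}
Adding $\FIM_\star$ to this cancels one copy of $(\der{\natvec})^\T \sCov \, \der{\natvec}$, leaving a pure quadratic in $\der{\natvec}$.

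It then remains to expand $(\sCov^{-1}\dermat - \der{\natvec})^\T \sCov (\sCov^{-1}\dermat - \der{\natvec})$, subtract it from $\LIM = \dermat^\T \sCov^{-1}\dermat$, and check term-by-term that the resulting expression matches $\FIM_\star + \wtilde{\FIM}$. I expect the only real obstacle to be the first step, namely justifying the interchange of differentiation and integration and invoking the moment constraint to dispose of $\der{\lambda_0}$; once $\mscore$ is written in the combiner form, the remaining steps are just the completing-the-square identity already implicit in \eqref{eq:jointmatrix_opt}.
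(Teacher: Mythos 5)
Your proof is correct, and it reaches \eqref{eq:gaptight} by a somewhat different route than the paper. The key step is shared: under \eqref{eq:maxent_dist} the misspecified score takes the combiner form $\mscore = \der{\natvec^\top}(\svec - \sMean)$, with $\der{\lambda_0}$ eliminated via the normalization and the moment constraint \eqref{eq:momentconstraint}; your derivation through $\E_{p_\star}\bigl[\mscore\bigr] = \0$ is equivalent to the paper's direct differentiation of the log-partition function in \eqref{eq:grad_logpartition}, and both yield \eqref{eq:misspecifiedscore_basic}. Where you diverge is in the subsequent bookkeeping. You evaluate $\FIM_\star$ and $\wtilde{\FIM}$ head-on from \eqref{eq:MFIM} and \eqref{eq:TFIM}, using $\E[\score(\svec-\sMean)^\T]=\dermat^\T$ and $\E[(\svec-\sMean)(\svec-\sMean)^\T]=\sCov$, obtaining $\FIM_\star + \wtilde{\FIM} = \dermat^\T\der{\natvec} + \der{\natvec^\T}\dermat - \der{\natvec^\T}\sCov\,\der{\natvec}$, and then verify \eqref{eq:gaptight} by expanding the square; these intermediate expressions are all correct. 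The paper instead starts from the gap identity $\FIM - (\FIM_\star+\wtilde{\FIM}) = \E[\dscore\dscore^\T]$ in \eqref{eq:gap}, splits $\dscore$ into the two summands of \eqref{eq:dscore}, and proves they are orthogonal so that the expectation separates into $(\FIM - \LIM)$ plus the defect term. The two arguments rest on identical moment computations, so neither is more general; yours is slightly more mechanical and avoids having to verify an orthogonality relation, while the paper's decomposition makes structurally visible why the defect is a $\sCov$-weighted squared distance between $\der{\natvec}$ and the optimal combiner $\sCov^{-1}\dermat$ of Section~\ref{sec:PIM_algebra} --- the observation that Theorem~\ref{thm:bound} then exploits.
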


\begin{proof}
For \eqref{eq:maxent_dist}, we have that
\begin{equation}\label{eq:misspecifiedscore_basic}
\begin{split}
\der{\ln p_\star} &=  \der{\natvec^\top} \svec -
\der{\lambda_0} \\
&= \der{\natvec^\top} (\svec - \sMean)
\end{split}
\end{equation}
where $\der{\natvec^\top} $ is $n \times M$. The second
line follows from the fact that 
\begin{equation}\label{eq:grad_logpartition}
\begin{split}
\der{\lambda_0} &= \frac{1}{\int \exp( \natvec^\top \svec) d\y'} \int
\der{\exp( \natvec^\top \svec)} \: d\y\\
&= \int \der{\natvec^\top} \svec \left( \frac{ \exp(
    \natvec^\top \svec)}{\int \exp( \natvec^\top
    \svec ) d\y'} \right) d\y \\
&= \int \der{\natvec^\top} \svec p_\star \:  d\y \\
&= \int \der{\natvec^\top} \svec p_\psub \: d\y \\
&= \der{\natvec^\top} \sMean.
\end{split}
\end{equation}
Next, from the proof of \eqref{eq:genbound} we have that
\begin{equation}\label{eq:gap}
\begin{split}
\FIM
- (\FIM_\star + \wtilde{\FIM}) = \E\left[ \dscore \dscore^\T \right] .
%&= \E\left[ \left(\score - \mscore \right)\left(\score - \mscore\right)^\T %\right]
\end{split}
\end{equation}
Using \eqref{eq:score}, the divergence score can be written as the sum
of two random vectors,
\begin{equation}\label{eq:dscore}
\begin{split}
\dscore &= \left(\score - \dermat^\T \sCov^{-1}(\svec - \sMean) \right) \\
&\quad  + \left( \dermat^\T \sCov^{-1}(\svec - \sMean) - \mscore \right).
\end{split}
\end{equation}
For the maximum entropy distribution, these two random vectors are orthogonal, i.e.,
\begin{equation*}
\begin{split}
&\E\Bigl[ \left(\score - \dermat^\T \sCov^{-1}(\svec - \sMean) \right) \\
&\quad \times \left( \dermat^\T \sCov^{-1}(\svec - \sMean) - \mscore \right)^\T \Bigr] \\
%&= \E \left[ \left(\score - \dermat^\T \sCov^{-1}(\svec - \sMean) \right)(\svec - \sMean)^\T \right] \\
%&\quad \times \left(\sCov^{-1} \dermat - \der{\natvec} \right) \\
&=  \left( \dermat^\top - \dermat^\T \sCov^{-1}\sCov
\right)\left(\sCov^{-1} \dermat - \der{\natvec} \right) \\
&= \0,
\end{split}
\end{equation*}
where the equality follows from \eqref{eq:misspecifiedscore_basic}
and \eqref{eq:momentidentity}. Finally, by inserting
\eqref{eq:dscore} into \eqref{eq:gap}, the right hand side of \eqref{eq:gap} equals
$$\FIM - \dermat^\T \sCov^{-1} \dermat +   \left(\sCov^{-1} \dermat - \der{\natvec} \right)^\T \sCov
\left(\sCov^{-1} \dermat - \der{\natvec} \right)$$
and result \eqref{eq:gaptight} follows.
\end{proof}

\begin{theorem}\label{thm:bound}
The tightest bound \eqref{eq:genbound} is the PIM:
\begin{equation}\label{eq:lowerbound_maxent}
\boxed{\FIM_\star + \wtilde{\FIM} \preceq \LIM \preceq \FIM,}
\end{equation}
and the corresponding misspecified information matrix is $\FIM_\star = \LIM$.
\end{theorem}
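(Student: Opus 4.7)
The plan is to reduce the chain \eqref{eq:lowerbound_maxent} to Theorem 1 and Lemma 2, and then to verify the auxiliary equality $\FIM_\star = \LIM$ by a direct calculation using the maximum entropy form \eqref{eq:maxent_dist}.

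For the chain itself, the right inequality $\LIM \preceq \FIM$ is precisely Theorem 1. For the left inequality $\FIM_\star + \wtilde{\FIM} \preceq \LIM$, I would invoke Lemma 2 and observe that equation \eqref{eq:gaptight} writes $\FIM_\star + \wtilde{\FIM}$ as $\LIM$ minus the quadratic form $(\sCov^{-1}\dermat - \der{\natvec})^\T \sCov (\sCov^{-1}\dermat - \der{\natvec})$; since $\sCov \succ \0$, this correction is positive semidefinite and the desired ordering follows at once.

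For $\FIM_\star = \LIM$, I would substitute the misspecified score \eqref{eq:misspecifiedscore_basic} into the definition \eqref{eq:MFIM} to obtain $\FIM_\star = \der{\natvec^\T}\sCov\der{\natvec}$, and then aim to establish the identity $\dermat = \sCov\der{\natvec}$; together with the symmetry of $\sCov$ this yields $\FIM_\star = \dermat^\T\sCov^{-1}\dermat = \LIM$. Notice that the same identity forces the quadratic correction in \eqref{eq:gaptight} to vanish, so that simultaneously $\wtilde{\FIM} = \0$ and the left inequality in \eqref{eq:lowerbound_maxent} is attained with equality. I would derive the identity by differentiating the moment constraint \eqref{eq:momentconstraint} with respect to $\pvec$, with $\der{p_\star} = p_\star\,\der{\natvec^\T}(\svec - \sMean)$ playing the role of the chain rule, a fact that follows immediately from \eqref{eq:misspecifiedscore_basic}.

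The step I expect to require the most care is this last identification. A direct differentiation of $\int \svec\, p_\star\, d\y = \sMean$ yields $\dermat^\T = \der{\natvec^\T}\Cov_{p_\star}[\svec]$, so the identity $\dermat = \sCov\der{\natvec}$ ultimately depends on replacing the covariance under $p_\star$ with the covariance $\sCov$ under $p_\psub$ that appears in $\LIM$. Justifying this replacement, by leaning on the exponential-family structure of $p_\star$ together with its prescribed moment agreement with $p_\psub$, is where the substantive work of the proof lies.
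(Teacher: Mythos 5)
Your handling of the chain \eqref{eq:lowerbound_maxent} is correct and matches the paper: the right inequality is Theorem~1, and the left one follows from \eqref{eq:gaptight} because the subtracted quadratic form $\left(\sCov^{-1}\dermat - \der{\natvec}\right)^\T \sCov \left(\sCov^{-1}\dermat - \der{\natvec}\right)$ is positive semidefinite ($\sCov$ is a nonsingular covariance matrix). Your computation $\FIM_\star = \der{\natvec^\T}\sCov\der{\natvec}$ from \eqref{eq:MFIM} and \eqref{eq:misspecifiedscore_basic} is also right; note that it is correct precisely because the expectation in \eqref{eq:MFIM} is taken under $p_\psub$, which is why $\sCov$, and not the covariance under $p_\star$, appears.

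The gap is exactly where you suspected, and it cannot be closed along the route you sketch. Differentiating the moment constraint \eqref{eq:momentconstraint} gives, as you note, $\dermat^\T = \der{\natvec^\T}\,\Cov_{p_\star}[\svec]$, and $\Cov_{p_\star}[\svec]$ is \emph{not} equal to $\sCov = \Cov_{p_\psub}[\svec]$ in general: the maximum entropy distribution is constrained only to match the first moment of $\svec$, so nothing forces the second moments of $\svec$ under $p_\star$ and $p_\psub$ to agree (they would if, say, all pairwise products of entries of $\svec$ were themselves included among the statistics, but that is not assumed). Hence $\dermat = \sCov\der{\natvec}$ is not an identity satisfied by every max-ent $p_\star$, and the ``exponential-family structure plus moment agreement'' argument you hope for does not exist. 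The paper's proof does not need this identity: it reads \eqref{eq:tightcondition}, i.e.\ $\der{\natvec^\T} = \dermat^\T\sCov^{-1}$, not as a fact but as the condition under which the correction term in \eqref{eq:gaptight} vanishes --- equivalently, the condition characterizing the \emph{tightest} member of the family of bounds \eqref{eq:genbound}, which is what the theorem is about. Once \eqref{eq:tightcondition} is imposed, $\FIM_\star = \der{\natvec^\T}\sCov\der{\natvec} = \dermat^\T\sCov^{-1}\dermat = \LIM$ and $\wtilde{\FIM} = \0$ follow by direct substitution, with no claim about $\Cov_{p_\star}[\svec]$ required. You should therefore restructure the last part: keep your unconditional derivation of $\FIM_\star + \wtilde{\FIM} \preceq \LIM$ from \eqref{eq:gaptight}, and present $\FIM_\star = \LIM$ as the description of the equality case under \eqref{eq:tightcondition}, rather than as a property of every maximum entropy distribution.
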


\begin{proof}
It follows from \eqref{eq:misspecifiedscore_basic} that for maximum
entropy distributions, the misspecified information matrix is
$$\FIM_\star = \der{\natvec^\T} \sCov \der{\natvec}.$$
Furthermore, in \eqref{eq:gaptight} it is readily seen that the
tightest bound is attained for
\begin{equation}\label{eq:tightcondition}
\der{\natvec^\T}
=\dermat^\T \sCov^{-1}.
\end{equation}
Therefore \eqref{eq:tightcondition} leads to $\FIM_\star = \LIM$ 
and $\wtilde{\FIM} = \0$ in \eqref{eq:genbound}.
% \begin{equation*}
% \begin{split}
% \E\left[ \dscore \mscore^\T \right] &= \E\left[ \score \mscore^\T
% \right] - \FIM_\star \\
% &= \E\left[ \score (\svec - \sMean)^\T \right]\der{\natvec} -
% \FIM_\star \\
% &=  \dermat^\T \der{\natvec} - \FIM_\star.
% \end{split}
% \end{equation*}
\end{proof}

\subsection{The PIM increases as $M$ increases}
\label{sec:PIM_increaseM}

The vector $\svec$ employs $M$ statistics, and to stress that we 
write $\LIM_M = \dermat^\T_M \sCov^{-1}_M \dermat_M$. 
\begin{theorem}
Including more statistics in $\svec$ can never worsen the bound, i.e.,
\begin{equation}
\0 \preceq \LIM_{n} \preceq \cdots \preceq \LIM_{M} \preceq \LIM_{M+1} \preceq \FIM.
\end{equation}
\end{theorem}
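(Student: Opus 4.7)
The plan is to prove the result in three pieces. The bound $\LIM_M \preceq \FIM$ for every $M$ is just Theorem~1, and $\0 \preceq \LIM_n$ is immediate from $\LIM_n = \dermat_n^\T \sCov_n^{-1} \dermat_n$ together with $\sCov_n^{-1} \succ \0$. The substantive content is the one-step monotonicity $\LIM_M \preceq \LIM_{M+1}$, from which the entire chain follows by induction on the number of appended statistics.

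My preferred route exploits the variational characterization already established in Section~\ref{sec:PIM_algebra}: the inequality \eqref{eq:bound_opt} shows that $\dermat^\T \weight (\weight^\T \sCov \weight)^{-1} \weight^\T \dermat \preceq \dermat^\T \sCov^{-1} \dermat$ for every full-rank combiner $\weight$. Writing $\svec_{M+1} = [\svec_M^\T, s_{M+1}]^\T$ and partitioning
\begin{equation*}
\sCov_{M+1} = \begin{bmatrix} \sCov_M & \mbf{c} \\ \mbf{c}^\T & \sigma^2 \end{bmatrix}, \qquad \dermat_{M+1} = \begin{bmatrix} \dermat_M \\ \mbf{d}^\T \end{bmatrix},
\end{equation*}
I observe that any combiner $\weight \in \mathbb{R}^{M \times n}$ lifts to $\weight' = [\weight^\T, \0]^\T \in \mathbb{R}^{(M+1) \times n}$, and a direct block multiplication gives $\dermat_{M+1}^\T \weight' = \dermat_M^\T \weight$ together with ${\weight'}^\T \sCov_{M+1} \weight' = \weight^\T \sCov_M \weight$. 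Substituting $\weight'$ into \eqref{eq:bound_opt} written for the augmented problem, and then specializing to $\weight = \sCov_M^{-1} \dermat_M$ (the optimizer for $\LIM_M$), delivers $\LIM_M \preceq \LIM_{M+1}$.

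An equivalent direct route is a block-inversion calculation: setting $s = \sigma^2 - \mbf{c}^\T \sCov_M^{-1} \mbf{c} > 0$ (positive since $\sCov_{M+1} \succ \0$) and $\mbf{e} = \mbf{d} - \dermat_M^\T \sCov_M^{-1} \mbf{c}$, expanding $\dermat_{M+1}^\T \sCov_{M+1}^{-1} \dermat_{M+1}$ via the standard partitioned-inverse formula yields $\LIM_{M+1} = \LIM_M + s^{-1} \mbf{e} \mbf{e}^\T$, from which $\LIM_{M+1} \succeq \LIM_M$ is immediate; as a bonus one sees that the increment is strict unless the new statistic's mean gradient lies in the row span of $\dermat_M$ after the linear correction by $\mbf{c}$. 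The only real pitfall to watch for is that all comparisons are in the Loewner order, so the naive ``max over a larger set is larger'' intuition has to be backed up either by the PSD upper bound \eqref{eq:bound_opt} or by the explicit rank-one-increment formula; beyond that the argument is entirely routine.
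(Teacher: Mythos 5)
Your proposal is correct, and your ``equivalent direct route'' is precisely the paper's proof: the authors apply the partitioned-inverse formula to $\sCov_{M+1}$ and obtain $\LIM_{M+1} = \LIM_M + (\kappa - \mbf{c}^\T\sCov_M^{-1}\mbf{c})^{-1}(\mbf{d}-\dermat_M^\T\sCov_M^{-1}\mbf{c})(\mbf{d}-\dermat_M^\T\sCov_M^{-1}\mbf{c})^\T \succeq \LIM_M$, exactly your rank-one-increment identity. Your preferred first route --- lifting a combiner $\weight$ for the $M$-statistic problem to $\weight' = [\weight^\T, \0]^\T$ and invoking the variational bound \eqref{eq:bound_opt} for the augmented problem --- is a genuinely different and equally valid argument; it makes the ``optimizing over a larger class of combiners'' intuition rigorous without any matrix inversion, at the mild cost of requiring $\weight^\T\sCov_M\weight = \LIM_M$ to be nonsingular (i.e., $\dermat_M$ of full column rank) so that the quantity in \eqref{eq:bound_opt} is defined, a hypothesis the explicit block-inversion route does not need. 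Your observation that the increment is strict unless $\mbf{d} = \dermat_M^\T\sCov_M^{-1}\mbf{c}$ is a correct refinement not stated in the paper.
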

\begin{proof}
Suppose we
extend the vector $\svec$ with an $(M+1)$th statistic so that we can write
\begin{equation}
\sCov_{M+1} = \begin{bmatrix}
\sCov_{M} & \mbf{c} \\
\mbf{c}^\T & \kappa
\end{bmatrix} \quad \text{and} \quad
\dermat_{M+1} = \begin{bmatrix}
\dermat_{M} \\
\mbf{d}^\T
\end{bmatrix} .
\end{equation}
Then calculating $\sCov^{-1}_{M+1}$ using
\cite[Lemma~A.2]{Soderstrom&Stoica1988_system}, we obtain
\begin{equation*}
\begin{split}
\LIM_{M+1} &= \begin{bmatrix}
\dermat^\T_M & \mbf{d}
\end{bmatrix}\begin{bmatrix}
\sCov_{M} & \mbf{c} \\
\mbf{c}^\T & \kappa
\end{bmatrix}^{-1}\begin{bmatrix}
\dermat_{M} \\
\mbf{d}^\T
\end{bmatrix} \\
&=  \begin{bmatrix}
\dermat^\T_M & \mbf{d}
\end{bmatrix}
\Bigl(  \begin{bmatrix} \I\\ \0 \end{bmatrix}
  \sCov^{-1}_{M}  \begin{bmatrix} \I & \0 \end{bmatrix}
  \\
&\quad +  \frac{1}{\kappa - \mbf{c}^\T \sCov^{-1}_M \mbf{c}}\begin{bmatrix} -\sCov^{-1}_M \mbf{c} \\
    1 \end{bmatrix}  \begin{bmatrix} -\mbf{c}^\T \sCov^{-1}_M & 1 \end{bmatrix} \Bigr) \begin{bmatrix}
\dermat_M \\ \mbf{d}^\T
\end{bmatrix} \\
&= \LIM_{M} + \frac{(\mbf{d} - \dermat^\T \sCov^{-1}_M
  \mbf{c})(\mbf{d} - \dermat^\T \sCov^{-1}_M \mbf{c})^\T}{\kappa -
  \mbf{c}^\T \sCov^{-1}_M \mbf{c}}\\
&\succeq \LIM_{M}.
\end{split}
\end{equation*}
\end{proof}
An interesting research problem is to study the limit of $\LIM_M$ as
$M\rightarrow \infty$. Under what conditions will $\LIM_M$ converge to
$\FIM$?

\section{PIM and Generalized method of moments}
\label{sec:PIM_achievable}

An efficient unbiased estimator $\what{\pvec}$ exists if and only if the
following identity holds \cite{Kay1993_fundamentals,vanTrees2013_detection}
\begin{equation}\label{eq:efficientcondition}
\der{\ln p_\psub} = \FIM ( \what{\pvec} - \pvec ),
\end{equation}
which is satisfied only in rare cases. In more general scenarios, the maximum
likelihood estimator
\begin{equation}\label{eq:ML}
\what{\pvec} = \argmin_{\pvec} \; -\ln p(\y ; \pvec).
\end{equation}
is (asymptotically) unbiased with (asymptotic)
covariance matrix $\Cov[\what{\pvec}] = \FIM^{-1}$ under certain
regularity conditions. It is thus asymptotically efficient. Given an appropriate initialization
point $\what{\pvec}_{0}$, \eqref{eq:ML} can be solved iteratively using the Newton-based  scoring method:
\begin{equation}\label{eq:scoringmethod}
\what{\pvec}_{i+1} = \what{\pvec}_{i} - \FIM^{-1} \der{-\ln
p_{\psub}}\Bigl|_{\pvec = \what{\pvec}_i}.
\end{equation}

When $\FIM$ and $\der{\ln p_{\psub}}$ are unknown, we may use instead the best misspecified
information matrix \eqref{eq:MFIM} and score
\eqref{eq:misspecifiedscore_basic}, in the sense of
Theorem~\ref{thm:bound}. Then the so-obtained scoring method is
related to a method that minimizes a
certain class of cost functions $V(\pvec)$ which we characterize in what follows. 
Consider evaluating the misspecified variables \eqref{eq:MFIM} and 
\eqref{eq:misspecifiedscore_basic} obtained using
\eqref{eq:tightcondition} and a consistent estimate $\what{\sCov}$
instead of $\sCov$. Then the corresponding scoring method, analogous
to \eqref{eq:scoringmethod}, can be expressed as
\begin{equation}\label{eq:scoringmethod_maxent}
\begin{split}
\what{\pvec}_{i+1} = \what{\pvec}_{i} - \what{\FIM}^{-1}_\star \der{-\ln \what{p}_\star} \Bigl|_{\pvec = \what{\pvec}_i}.,
\end{split}
\end{equation}
where we define (according to the above discussion about \eqref{eq:MFIM}, 
\eqref{eq:misspecifiedscore_basic} and
\eqref{eq:tightcondition})
\begin{equation}\label{eq:V_derivatives}
\begin{split}
\der{\ln \what{p}_\star} &= \dermat^\T(\pvec) \what{\sCov}^{-1} ( \svec - \sMean(\pvec) ) \\
\what{\FIM}_\star&=  \dermat^\T(\pvec) \what{\sCov}^{-1} \dermat(\pvec) .
\end{split}
\end{equation}
One can verify that \eqref{eq:scoringmethod_maxent} is a scoring
method for solving the following problem
\begin{equation}\label{eq:GMM}
\what{\pvec} = \argmin_{\pvec} \; \underbrace{\frac{1}{2} (\svec - \sMean(\pvec) )^\top \what{\sCov}^{-1} (\svec - \sMean(\pvec) )}_{\triangleq V(\pvec)},
\end{equation}
by noting that $\pder_\psub V(\pvec) = -\der{\ln \what{p}_\star}$ and that $\what{\FIM}_\star$ is an estimate of the Hessian $\pder^2_\psub V(\pvec)$. Eq.~\eqref{eq:GMM} is recognized as a generalized method of moments, using an
asymptotically optimal weight matrix $\what{\sCov}^{-1}$
\cite{Hansen1982_large,Soderstrom&Stoica1988_system}.

The cost function $V(\pvec)$ can be characterized around its minimum, as follows
\begin{equation}\label{eq:minimum}
\0 = \pder_\psub V(\what{\pvec}) \simeq \pder_\psub V(\pvec)  +
\pder^2_\psub V(\pvec)(\what{\pvec} - \pvec), 
\end{equation}
where the right-hand side is a Taylor expansion. Using properties of
\eqref{eq:V_derivatives} in \eqref{eq:minimum}, we can solve for $\what{\pvec}$ and
obtain the following approximation
\begin{equation*}
\what{\pvec} \simeq \pvec + (\dermat^\T \what{\sCov}^{-1} \dermat)^{-1}\dermat^\T \what{\sCov}^{-1} ( \svec - \sMean ).
\end{equation*}
Since the unknown distribution $p_{\psub}$ satisfies \eqref{eq:momentconstraint},
it follows that $\E[\what{\pvec}] \simeq \pvec$ and
\begin{equation*}%\label{eq:GMM_cov}
\Cov[ \what{\pvec} ] \simeq (\dermat^\T \sCov^{-1} \dermat)^{-1} = \LIM^{-1} \succeq \FIM^{-1}.
\end{equation*}
The above expressions hold asymptotically as the number of samples $N$ in
$\y$ increases \cite{Soderstrom&Stoica1988_system}.

In summary, using a scoring method analogous to \eqref{eq:scoringmethod} leads to the generalized method of moments \eqref{eq:GMM} with asymptotic covariance given by the inverse PIM.

\section{Conclusions}

We have provided a direct, algebraic derivation of a tractable lower bound on the
Fisher information matrix which we called the Pearson information
matrix (for reasons explained above). Furthermore, we presented an
information-theoretic link between the PIM and misspecified data
distributions as well as a connection to the generalized
method of moments.

\bibliographystyle{ieeetr}
\bibliography{refs_fimbound}

\end{document}